\newtheorem*{assumption*}{\assumptionnumber}
\providecommand{\assumptionnumber}{}
\newtheorem{theorem}{Theorem}[section]
\newtheorem{lemma}{Lemma}[section]
\newtheorem{commen}{Comment}[section]
\newtheorem{definition}{Definition}[section]
\newtheorem{remark}{Remark}[section]
\DeclareMathOperator*{\esssup}{ess\,sup}
\def\r{\mathbb{R}}
\begin{document}

\title[Exponential dichotomy and admissibility]{Exponential dichotomy and $(L^p,L^q)$-admissibility} 
         
\thanks{This research has been done under the research project QG.23.02 of Vietnam National University, Hanoi}

\subjclass[2010]{37D20, 37D25}

\keywords{Admissibility, Exponetial dichotomies with respect to a family of norms, $L^p$ spaces.}

\author{Trinh Viet Duoc}
\address{Trinh Viet Duoc,
Faculty of Mathematics, Mechanics, and Informatics\\ University of Science, Vietnam National University\\
334 Nguyen Trai, Hanoi, Vietnam; and 
 Thang Long Institute of Mathematics and Applied Sciences, Thang Long University, Nghiem Xuan Yem, Hanoi, Vietnam}
\email{tvduoc@gmail.com, duoctv@vnu.edu.vn}

\author{Nguyen Van Trong}
\address{Nguyen Van Trong, Faculty of Mathematics, Mechanics, and Informatics\\ University of Science, Vietnam National University\\
334 Nguyen Trai, Hanoi, Vietnam}
\email{mathchessguitar@gmail.com}


\begin{abstract}
We consider the notion of an exponential dichotomy with respect to a family of norms for an evolutionary family in a Banach space, and we characterize it by the admissibility of the pair $(L^p,L^q)$ for $p,q \in [1,\infty]$ with $p\ge q$. We then use this characterization to establish the robustness of an exponentially dichotomic evolutionary family with respect to a family of norms.
\end{abstract} 

\maketitle

	\section{Introduction}

It is well known that the characterization of uniform exponential dichotomy by admissibility has developed rapidly since Perron's paper \cite{11} in 1930. Many results on the case of differential equations can be found in the monographs due to Massera and Sch\"{a}ffer \cite{10}, Dalecskii and Krein \cite{4}, Levitan and Zhikov \cite{9}, Chicone and Latushkin \cite{3}. For nonuniform exponential dichotomy, we refer the reader to \cite{1,2} and references therein.

The notion of an exponential dichotomy with respect to a family of norms was introduced by Barreira et. al., in \cite{14,17} they characterized the exponential dichotomy with respect to a family of norms in terms of the admissibility of the pair $(L^p,L^q)$ for $p,q \in [1,\infty)$ with $p\ge q$ and $(L^\infty,L^\infty)$, respectively. In the case of the uniform exponential dichotomy, we refer the reader to \cite{16,12,13,15}. Note that a pair $(A, B)$ in this article corresponds to output-input.

For the evolutionary family on the half-line, Preda et. al. \cite{16} characterized the uniform exponential dichotomy in terms of the admissibility of the pair $(L^p,L^q)$ for $p,q \in [1,\infty]$ with $p\ge q$ and $(p,q) \ne (\infty, 1)$. Inspired by this result, our main aim is to characterize the exponential dichotomy with respect to a family of norms for a general evolutionary family on the line by the admissibility of the pair $(L^p,L^q)$ for $p,q \in [1,\infty]$ with $p\ge q$ and $(p,q) \ne (\infty, 1)$. The main contributions and novelty of our results are as follows:
\begin{enumerate}
\item The range of $(p,q)$ is larger, so our results cover some results in \cite{17,14}.
\item Better output space (contains more information, see $Y_1$ and Theorem \ref{thm:2.2}).
\item The family of dichotomic projections on the line is unique (see Comment \ref{re2}).
\end{enumerate}

\section{Preliminaries}
Let $X=(X,\Vert\cdot\Vert)$ be a real or complex Banach space and $B(X)$ be the space of all bounded linear operators on $X$. Moreover, we consider a family of norms $\Vert \cdot\Vert _t$ on $X$ for $t\in \r$ satisfying
\begin{enumerate}[(i)]
\item There exist constants $C>0$ and $\epsilon \geq 0$ such that 
\begin{equation}
\Vert x \Vert \leq \Vert x\Vert_t \leq Ce^{\epsilon|t|}\Vert x \Vert;    \label{2.3}
\end{equation}
\item The map $ t \mapsto \|x\|_t$ is measurable for each $x \in X$.   
\end{enumerate}
In $X$, we always consider the topology generated by the original norm $\|\cdot\|$. Extending the concept of an evolutionary family, we obtain the concept of an evolutionary family with respect to a family of norms $\Vert \cdot\Vert _t$ on $X$ for $t\in \r$ as follows.

\begin{definition} \label{dn:2.1}
 A family $\left( T(t,\tau)\right)_{t \geq \tau}$ with $t,\tau \in \mathbb{R}$ in $B(X)$ is said to be an evolutionary family with respect to a family of norms $\Vert \cdot\Vert _t$ with $t\in \r$ on the line if:
\begin{enumerate}[i.]
\item $T(t,t)=Id$  and $T(t,s)T(s,\tau)=T(t,\tau)$ for $t \geq s \geq \tau $ and $t,\tau,s \in \mathbb{R}$.
\item For each $x \in X $ and $\tau \in \r$,  the map  $t \mapsto T(t,\tau)x$  is continuous on $[\tau, \infty)$
and the map $s\mapsto T(\tau,s)x$ is continuous on $(-\infty,\tau]$.
\item There exist $K,a>0$ such that $\Vert T(t,\tau)x \Vert _t \leq Ke^{a(t-\tau)}\Vert x\Vert _{\tau}$ for all $x\in X$.
\end{enumerate}
\end{definition}

Similarly, we have the following definition.
\begin{definition}
An evolutionary family $\left( T(t,\tau)\right)_{t \geq \tau}$ with respect to a family of norms $\Vert \cdot\Vert _t$ on the line is said to be an exponential dichotomy with respect to the family of norms $\Vert \cdot\Vert _t$ if it satisfies the following conditions:
\begin{enumerate}[i.]
\item There exist dichotomic projections $P(t)$ in $B(X), t \in \mathbb{R}$ such that 
\begin{equation*}
P(t)T(t,\tau)=T(t,\tau)P(\tau) \quad \text{ for } t\geq \tau. 
\end{equation*}
\item The restriction mapping  
$T(t,\tau): Ker P(\tau) \rightarrow Ker P(t) $
is an isomorphism for $t \geq \tau$. We denote its inverse $(T(t,\tau)_{|_{\text{\rm Ker}P(\tau)}})^{-1}$ by $T(\tau,t)_|$.
\item There exist constants $a<0<b$ and $D>0$ such that
\begin{align}
\begin{split}
\Vert T(t,\tau)P(\tau)x \Vert _{\tau} &\leq De^{a(t-\tau)}\Vert x\Vert _{\tau},\\
\Vert T(\tau,t)_| Q(t)x \Vert _{t} &\leq De^{-b(t-\tau)}\Vert x\Vert _{t},
\end{split} \label{2.6}
\end{align}
for $t \geq \tau, x\in X$ and $Q(t)=I-P(t)$.
\end{enumerate}
\end{definition}
\begin{remark}\label{re1}
For each evolutionary family $\left( T(t,\tau)\right)_{t \geq \tau}$ is exponentially dichotomic with respect to a family of norms $\Vert \cdot\Vert _t$ on the line, we have $X=X^s(t) \oplus X^u(t)$ for each $t\in \r$, where $X^s(t)=P(t)X$ and $X^u(t)=Q(t)X$.
\end{remark}

We denote
$$ 
L^p(\r,X)=\{f:\r \to X \mid f \text{ is measurable and } \int_{-\infty}^{+\infty}\Vert f(t)\Vert_t^p \,dt < \infty\}
$$
for $p\in [1, \infty)$  and by
$$ 
L^\infty(\r,X)=\{f:\r \to X \mid f \text{ is measurable and } \esssup \Vert f(t)\Vert_t < \infty\}.
$$
Then, $L^p(\r,X)$ and $L^\infty (\r,X)$ are Banach spaces endowed with the respectively norms
 $$\Vert f\Vert_p= \Big( \int_{-\infty}^{+\infty}\Vert f(t)\Vert_t^p \,dt\Big)^{1/p} \quad
\text{ and } \quad  \Vert f\Vert_\infty = \esssup_{t\in \r} \|f(t)\|_t.
$$
For the simplicity of notations we will use the notation $L^p$ for $L^p(\r,X)$ where $p\in [1,\infty]$. Also, we set the Banach space
$$ C_b= \{f:\r \to X \mid f \text{ is continuous and } \sup \Vert f(t)\Vert_t < \infty\} \text{ with the norm }  \Vert f\Vert_\infty = \sup_{t\in \r} \|f(t)\|_t.$$
Finally, we note that the set $L^p$ is a Banach space with the norm $\Vert \cdot\Vert _p, p\in [1,\infty]$ which is proven similarly to \cite[Theorem 1]{14}.

 \section{Exponential dichotomy and ($L^p,L^q$)-admissibility}
 Let $p,q \in [1, \infty]$ with $p \geq q$. We characterize the notion of an exponential dichotomy for an evolutionary family with respect to a family of norms $\Vert \cdot\Vert _t$ by the admissibility of the Banach space pair $(Y_1,Y_2)$, where $Y_1=L^p\cap C_b$ and $Y_2=L^q$.

Given an evolutionary family $(T(t,\tau))_{t \geq \tau}$ on Banach space $X$ with respect to a family of norms $\Vert \cdot\Vert _t$ and a function $y: \r \to X$, 
we consider the integral equation
\begin{equation}
x(t)=T(t,\tau)x(\tau)+\displaystyle\int\limits_{\tau}^{t}T(t,s)y(s)ds  \quad  \text{for} \quad t\geq \tau. \label{2.7}
\end{equation}
The following lemma gives a sufficient condition for the continuity of the function $x$.
\begin{lemma}\label{lem1}
Given $y\in Y_2$. If a function $x: \r \to X$ satisfies \eqref{2.7}, then $x$ is continuous on $\r$.
\end{lemma}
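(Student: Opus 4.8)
The plan is to fix $t_0 \in \mathbb{R}$ and show that $x$ is continuous at $t_0$, splitting the verification into right-continuity and left-continuity since the representation \eqref{2.7} behaves differently on the two sides. For the right-continuity at $t_0$, I would take $\tau = t_0$ in \eqref{2.7}, so that for $t \ge t_0$ we have $x(t) = T(t,t_0)x(t_0) + \int_{t_0}^{t} T(t,s)y(s)\,ds$. The first term is continuous on $[t_0,\infty)$ by Definition \ref{dn:2.1}(ii). For the integral term, the idea is to write, for $t_0 \le t \le t'$,
\begin{equation*}
\int_{t_0}^{t'} T(t',s)y(s)\,ds - \int_{t_0}^{t} T(t,s)y(s)\,ds = \bigl(T(t',t)-Id\bigr)\int_{t_0}^{t} T(t,s)y(s)\,ds + \int_{t}^{t'} T(t',s)y(s)\,ds,
\end{equation*}
using the evolution property $T(t',s) = T(t',t)T(t,s)$. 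The first piece tends to $0$ as $t' \to t^+$ because $T(\cdot,t)v$ is continuous for fixed $v = \int_{t_0}^{t}T(t,s)y(s)\,ds \in X$, and the second piece is estimated in the $\|\cdot\|_{t'}$-norm by $\int_t^{t'} Ke^{a(t'-s)}\|y(s)\|_s\,ds$, which goes to $0$ by absolute continuity of the Lebesgue integral once we know $s \mapsto \|y(s)\|_s$ is locally integrable. Continuity of $t \mapsto \int_{t_0}^t T(t,s)y(s)\,ds$ at a point $t > t_0$ from the left is handled the same way with the roles reversed.

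For left-continuity of $x$ at $t_0$, I would instead use \eqref{2.7} with the upper limit fixed at $t_0$: for $\tau \le t_0$, $x(t_0) = T(t_0,\tau)x(\tau) + \int_{\tau}^{t_0} T(t_0,s)y(s)\,ds$, which rearranges to $T(t_0,\tau)x(\tau) = x(t_0) - \int_{\tau}^{t_0} T(t_0,s)y(s)\,ds$; the right-hand side is continuous in $\tau$ (the integral term by absolute continuity again). This does not immediately give continuity of $x(\tau)$ itself, so instead I would directly compare $x(\tau)$ and $x(\tau')$ for $\tau < \tau'$ both close to $t_0$: from \eqref{2.7} applied on $[\tau,\tau']$ we get $x(\tau') = T(\tau',\tau)x(\tau) + \int_{\tau}^{\tau'} T(\tau',s)y(s)\,ds$, and then using $x(t_0) = T(t_0,\tau')x(\tau') + \int_{\tau'}^{t_0}T(t_0,s)y(s)\,ds$ together with the corresponding identity for $\tau$, one subtracts to obtain $T(t_0,\tau')\bigl(x(\tau') - T(\tau',\tau)x(\tau)\bigr) = \int_{\tau}^{\tau'} T(t_0,s)y(s)\,ds$, i.e. $x(\tau') - x(\tau) = (T(\tau',\tau)-Id)x(\tau) + \int_{\tau}^{\tau'}T(\tau',s)y(s)\,ds$. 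The second term vanishes as $\tau' \to \tau$ by the integrability estimate, and the first vanishes because $\tau' \mapsto T(\tau',\tau)x(\tau)$ is continuous at $\tau' = \tau$.

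The one genuinely necessary preliminary — and the step I expect to require the most care — is showing that $s \mapsto \|y(s)\|_s$ is locally integrable on $\mathbb{R}$, so that all the integrals above and the absolute-continuity arguments make sense. When $q < \infty$ this follows from $y \in L^q = L^q(\mathbb{R},X)$ by Hölder's inequality on any bounded interval (using that the constant function $1$ lies in $L^{q'}$ of that interval), and when $q = \infty$ it is immediate from $\operatorname*{ess\,sup}\|y(s)\|_s < \infty$. With local integrability of $s \mapsto \|y(s)\|_s$ in hand, every estimate of the form $\bigl\|\int_t^{t'} T(t',s)y(s)\,ds\bigr\|_{t'} \le \int_t^{t'} Ke^{a(t'-s)}\|y(s)\|_s\,ds$ (from Definition \ref{dn:2.1}(iii) and \eqref{2.3}) tends to $0$ as $|t'-t| \to 0$, and the remaining pointwise-continuity inputs come directly from Definition \ref{dn:2.1}(ii). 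Combining the three bounds above at an arbitrary $t_0$ gives continuity of $x$ at $t_0$, and since $t_0$ was arbitrary, $x$ is continuous on $\mathbb{R}$.
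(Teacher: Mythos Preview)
Your factorization for right-continuity is correct and clean: with $t$ fixed and $t' \to t^{+}$, the vector $v=\int_{t_0}^{t}T(t,s)y(s)\,ds$ is \emph{fixed}, so strong continuity of $T(\cdot,t)$ gives $(T(t',t)-Id)v\to 0$. The gap is in the sentence ``left-continuity \ldots is handled the same way with the roles reversed.'' If you fix $t$ and send $t'\to t^{-}$, the analogous identity is
\[
\int_{t_0}^{t}T(t,s)y(s)\,ds-\int_{t_0}^{t'}T(t',s)y(s)\,ds
=\bigl(T(t,t')-Id\bigr)\int_{t_0}^{t'}T(t',s)y(s)\,ds+\int_{t'}^{t}T(t,s)y(s)\,ds,
\]
and now the vector $\int_{t_0}^{t'}T(t',s)y(s)\,ds$ \emph{moves with} $t'$. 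Strong continuity of $s\mapsto T(t,s)x$ applied to a merely bounded but varying vector does not force $(T(t,t')-Id)(\,\cdot\,)\to 0$; that would require norm continuity of $T(t,\cdot)$, which is not assumed. Your second paragraph does not rescue this: the identity $x(\tau')-x(\tau)=(T(\tau',\tau)-Id)x(\tau)+\int_{\tau}^{\tau'}T(\tau',s)y(s)\,ds$ taken ``as $\tau'\to\tau$'' again keeps $\tau$ fixed and re-proves right-continuity at $\tau$; taking instead $\tau'$ fixed and $\tau\to\tau'^{-}$ runs into the same moving-vector obstruction with $x(\tau)$.

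The paper sidesteps the issue by not factoring. It writes, for $t\to t_0$,
\[
\|z(t)-z(t_0)\|\le\int_{\tau}^{t_0}\|T(t,s)y(s)-T(t_0,s)y(s)\|\chi_{\{s\le t\wedge t_0\}}\,ds+M\int_{t\wedge t_0}^{t\vee t_0}\|y(s)\|\,ds,
\]
and applies dominated convergence to the first integral: for each fixed $s$ the integrand tends to $0$ by strong continuity of $T(\cdot,s)y(s)$, and is dominated by $2M\|y(s)\|$ with $M=\sup\{\|T(r,s)\|:\tau\le s\le r\le t_0+1\}<\infty$ via Banach--Steinhaus. This argument is symmetric in $t\to t_0^{\pm}$. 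Your outline is easily repaired by substituting this DCT step for the left limit; the preliminaries you list (local integrability of $s\mapsto\|y(s)\|_s$, the tail estimate via Definition~\ref{dn:2.1}(iii)) are correct and needed in either approach.
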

\begin{proof}
Because the mapping $T(\cdot,\tau)x(\tau)$ is continuous on $[\tau,\infty)$,
one only have to prove the function
$$ z(t):= \int_\tau^t T(t,s) y(s)ds$$
is continuous on $[\tau,\infty)$ for each fixed $\tau\in \r$. Indeed, for $t_0\ge \tau$ and $t\in[t_0, t_0+1]$, we have
\begin{align*}
\|z(t)-z(t_0)\|&\le \int_{\tau}^{t_0}\| T(t,s)y(s)-T(t_0,s)y(s) \| ds 
+ \int_{t_0}^{t} \|T(t,s)y(s)\| ds\\ 
&\le   \int_{\tau}^{t_0}\| T(t,s)y(s)-T(t_0,s)y(s) \| ds 
+ M \int_{t_0}^{t} \|y(s)\|ds,\\
&\| T(t,s)y(s)-T(t_0,s)y(s) \| \le  2M \|y(s)\| \le 2M \|y(s)\|_s,
\end{align*}
where $M=\sup \{\Vert T(t,s)\Vert : s \in [\tau,t_0+1]\}< \infty$ (by the Banach--Steinhaus theorem).
By dominated convergence theorem and Lebesgue differentiation theorem (see \cite[Theorem 1.1.8 and Proposition 1.2.2, Chapt. 1]{ABHN}), we get
\[ 
\lim_{t\to t_0^+} \Big( \int_{\tau}^{t_0}\| T(t,s)y(s)-T(t_0,s)y(s) \| ds + M \int_{t_0}^{t} \|y(s)\|ds\Big) =0.
\]
Thus, $z$ is right continuous on $[\tau,\infty)$. For $t_0 > \tau$ and $t\in (\tau, t_0]$, we have
\begin{align*}
\|z(t)-z(t_0)\|&\le \int_{\tau}^{t}\| T(t,s)y(s)-T(t_0,s)y(s) \| ds + \int_{t}^{t_0} \|T(t_0,s)y(s)\| ds\\ 
&\le   \int_{\tau}^{t}\| T(t,s)y(s)-T(t_0,s)y(s) \| ds+M\int_{t}^{t_0} \|y(s)\|ds\\
&\le \int_{\tau}^{t_0}\| T(t,s)y(s)-T(t_0,s)y(s) \| ds+M\int_{t}^{t_0} \|y(s)\|ds.
\end{align*}
Similarly, $z$ is left continuous on $(\tau,\infty)$. 
Therefore, $z$ is continuous on $[\tau,\infty)$. Since $\tau$ is arbitrary, $x$ is continuous on $\r$. 
\end{proof}
We then define an operator related to the integral equation \eqref{2.7} as follows.
\begin{definition}\label{H}
The operator $H:\mathcal{D}(H) \to Y_2$ is defined by 
\begin{align*}
\begin{array}{rll}
\mathcal{D}(H)&:=&\{ x\in Y_1: \text{ there is } y \in Y_2 \text{ such that } x \text{ and } y \text { satisfying } \eqref{2.7}\},\\
Hx&:=&y.
\end{array}
\end{align*}
\end{definition}
We now justify the definition of $H$ and give its properties through the following lemma.
\begin{lemma} \label{lem2}
$H$ is a well-defined closed linear operator.
\end{lemma}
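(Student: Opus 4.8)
The plan is to establish three things in turn: (1) $H$ is well-defined as a single-valued map, (2) $H$ is linear, and (3) $H$ is closed.

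\textbf{Well-definedness.} The only subtlety is single-valuedness: I must show that if $x \in Y_1$ and there exist $y_1, y_2 \in Y_2$ with both $(x,y_1)$ and $(x,y_2)$ satisfying \eqref{2.7}, then $y_1 = y_2$ in $Y_2$. Subtracting the two integral equations gives $\int_\tau^t T(t,s)\bigl(y_1(s)-y_2(s)\bigr)\,ds = 0$ for all $t \ge \tau$ and all $\tau$. Fix $\tau$ and write $g = y_1 - y_2 \in L^q_{\mathrm{loc}}$. Then $\int_\tau^t T(t,s)g(s)\,ds = 0$ for every $t \ge \tau$. I would like to differentiate in $t$, but $T(t,s)$ need not be differentiable; instead I would use a difference-quotient / Lebesgue differentiation argument. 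For $t_0 > \tau$ a Lebesgue point of $s \mapsto g(s)$, consider $\frac{1}{h}\int_{t_0}^{t_0+h} T(t_0+h,s) g(s)\,ds$; using $0 = \int_\tau^{t_0+h} T(t_0+h,s)g(s)\,ds = T(t_0+h,t_0)\int_\tau^{t_0}T(t_0,s)g(s)\,ds + \int_{t_0}^{t_0+h}T(t_0+h,s)g(s)\,ds = 0 + \int_{t_0}^{t_0+h}T(t_0+h,s)g(s)\,ds$, so that last integral vanishes for all $h>0$; dividing by $h$, letting $h \to 0^+$, and using strong continuity of $T$ together with the Lebesgue differentiation theorem (exactly as invoked in Lemma~\ref{lem1}) yields $g(t_0) = 0$. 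Since a.e. point is a Lebesgue point, $g = 0$ a.e., i.e. $y_1 = y_2$ in $Y_2$. Hence $Hx := y$ is unambiguous.

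\textbf{Linearity.} This is routine: if $x_1, x_2 \in \mathcal{D}(H)$ with witnesses $y_1 = Hx_1$, $y_2 = Hx_2$, then for scalars $\alpha,\beta$ the function $\alpha x_1 + \beta x_2 \in Y_1$ (both $Y_1$ and $Y_2$ are linear spaces) satisfies \eqref{2.7} with right-hand side $\alpha y_1 + \beta y_2 \in Y_2$, using linearity of each $T(t,\tau)$ and of the integral. So $\alpha x_1 + \beta x_2 \in \mathcal{D}(H)$ and $H(\alpha x_1 + \beta x_2) = \alpha Hx_1 + \beta Hx_2$.

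\textbf{Closedness.} Suppose $x_n \to x$ in $Y_1$ and $Hx_n = y_n \to y$ in $Y_2$; I must show $x \in \mathcal{D}(H)$ and $Hx = y$, i.e. that $(x,y)$ satisfies \eqref{2.7}. Fix $t \ge \tau$. For each $n$, $x_n(t) = T(t,\tau)x_n(\tau) + \int_\tau^t T(t,s)y_n(s)\,ds$. Convergence in $Y_1 = L^p \cap C_b$ implies convergence in the sup-norm $\|\cdot\|_\infty$, hence pointwise in $(X,\|\cdot\|)$ (using \eqref{2.3}), so $x_n(t) \to x(t)$ and $T(t,\tau)x_n(\tau) \to T(t,\tau)x(\tau)$ by boundedness of $T(t,\tau)$. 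For the integral term I would estimate
\[
\Bigl\| \int_\tau^t T(t,s)\bigl(y_n(s) - y(s)\bigr)\,ds \Bigr\| \le M_{t,\tau} \int_\tau^t \|y_n(s) - y(s)\|\,ds \le M_{t,\tau} \int_\tau^t \|y_n(s)-y(s)\|_s\,ds,
\]
where $M_{t,\tau} = \sup\{\|T(t,s)\| : s \in [\tau,t]\} < \infty$ by Banach--Steinhaus; the last integral is the $L^1([\tau,t])$-norm of $\|y_n - y\|_{(\cdot)}$, which is controlled by $\|y_n - y\|_{L^q(\r,X)}$ via Hölder on the finite interval $[\tau,t]$ (for $q=1$ directly, for $q>1$ with the conjugate exponent), hence tends to $0$. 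Passing to the limit in the identity for $x_n$ gives the identity for $(x,y)$, so $x \in \mathcal{D}(H)$ and $Hx = y$. The main obstacle is the single-valuedness step, since it requires the Lebesgue-point argument rather than a naive differentiation of the evolution family; everything else is a routine limiting / linearity bookkeeping exercise.
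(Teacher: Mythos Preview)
Your proof is correct and follows essentially the same three-part structure as the paper's: single-valuedness via the Lebesgue differentiation theorem applied to $\frac{1}{h}\int T(t,s)(y_1-y_2)(s)\,ds$, linearity by inspection, and closedness by passing to the limit in \eqref{2.7} using H\"older on the finite interval $[\tau,t]$ together with the Banach--Steinhaus bound $M=\sup_{s\in[\tau,t]}\|T(t,s)\|$. The only cosmetic difference is that for single-valuedness the paper lets the lower limit $\tau\to t^-$ directly, whereas you first isolate the short interval $[t_0,t_0+h]$ via the evolution identity $T(t_0+h,t_0)\int_\tau^{t_0}T(t_0,s)g(s)\,ds=0$ before taking $h\to 0^+$; both lead to the same Lebesgue-point conclusion.
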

\begin{proof}
Let  $y_1,y_2 \in Y_2$ such that
 $$x(t)=T(t,\tau)x(\tau)+\displaystyle\int\limits_{\tau}^{t}T(t,s)y_1(s)ds,$$
 and
 $$x(t)=T(t,\tau)x(\tau)+\displaystyle\int\limits_{\tau}^{t}T(t,s)y_2(s)ds,$$
 for $t > \tau$. Then,
 $$\dfrac{t}{t-\tau}\displaystyle\int\limits_{\tau}^{t}T(t,s)y_1(s)ds=\frac{t}{t-\tau}\displaystyle\int\limits_{\tau}^{t}T(t,s)y_2(s)ds.$$
 Since the map  $s \mapsto T(t,s)y_i(s)$ is locally integrable for $i=1,2$, let $\tau \rightarrow t$, from the Lebesgue differentiation theorem it follows that $y_1(t)=y_2(t)$ for almost all $t \in \mathbb{R}$. Therefore, $H$ is well-defined. Furthermore, $H$ is obviously a linear operator.

Let $(x_n)_n \subset \mathcal{D}(H)$ such that $x_n $ converges to $x$ in $Y_1$ and $y_n:=Hx_n $ converges to $y$ in $Y_2$. 
For each fixed $\tau \in \r$, we have
 \begin{align*}
 x(t)-T(t,\tau)x(\tau)&= \lim_{n \rightarrow +\infty}\left( x_{n}(t)-T(t,\tau)x_{n}(\tau)\right)\\
 &= \lim_{n \rightarrow +\infty}\displaystyle\int\limits_{\tau}^{t}T(t,s)y_{n}(s)ds
 \end{align*}
 for $ t\geq \tau$. Moreover,
\begin{align*}
 \Big\Vert \displaystyle\int\limits_{\tau}^{t}T(t,s)y_{n}(s)ds&-\displaystyle\int\limits_{\tau}^{t}T(t,s)y(s)ds \Big\Vert \leq M \displaystyle\int\limits_{\tau}^{t}\Vert y_{n}(s)-y(s)\Vert ds\\
 &\leq M\left( \displaystyle\int\limits_{\tau}^{t}\Vert y_{n}(s)-y(s)\Vert ^q ds \right)^{\frac{1}{q}}(t-\tau)^{1-\frac{1}{q}}\\
 &\leq M
\Vert y_{n}-y \Vert _q (t-\tau)^{1-\frac{1}{q}},
\end{align*} 
where $M=\sup \{\Vert T(t,s)\Vert : s \in [\tau,t]\}< \infty$. Since $y_{n}$ converges to $y$ in $Y_2$, we obtain 
 $$ \displaystyle \lim_{n \rightarrow +\infty}\displaystyle\int\limits_{\tau}^{t}T(t,s)y_{n}(s)ds=\displaystyle\int\limits_{\tau}^{t}T(t,s)y(s)ds.$$ 
 Hence,
 $$x(t)-T(t,\tau)x(\tau)=\displaystyle\int\limits_{\tau}^{t}T(t,s)y(s)ds $$
for $t\geq \tau$. So, $x \in \mathcal{D}(H)$ and $Hx=y$. Therefore, $H$ is a closed operator.
\end{proof}

Firstly, we show that the exponential dichotomy with respect to a family of norms of an evolutionary family deduces $(L^p, L^q)$-admissibility (more precisely, $(L^p\cap C_b, L^q)$-admissibility), namely, $H: \mathcal{D}(H) \to Y_2$ is bijective.
\begin{theorem}\label{thm:2.2}
If the evolutionary family $(T(t,\tau))_{t \geq \tau}$ is exponentially dichotomic with respect to a family of norms $\Vert \cdot\Vert _t$, then for each $y \in Y_2$ there exists a unique $x \in Y_1$ satisfying \eqref{2.7}.
\end{theorem}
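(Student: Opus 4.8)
The plan is to construct, for a given $y \in Y_2 = L^q$, an explicit bounded solution $x$ of \eqref{2.7} using the dichotomy projections, then verify $x \in Y_1 = L^p \cap C_b$, and finally prove uniqueness inside $Y_1$. For existence, I would set
\[
x(t) = \int_{-\infty}^{t} T(t,s)P(s)y(s)\,ds - \int_{t}^{+\infty} T(s,t)_| \, Q(s)y(s)\,ds,
\]
which is the standard Green's-function formula adapted to the family of norms. The first step is to check that both integrals converge absolutely and that $x$ is bounded: using \eqref{2.6}, the integrand of the first term is bounded in $\|\cdot\|_t$ by $De^{a(t-s)}\|y(s)\|_s$, and of the second by $De^{-b(s-t)}\|y(s)\|_s$; since $a<0<b$, convolving the exponential kernels with $\|y(\cdot)\|_\cdot \in L^q$ against $L^{q'}$ kernels gives (via Young's / Hölder's inequality in the relevant exponent) that $\|x(\cdot)\|_\cdot$ is bounded, i.e. $x \in L^\infty$, and more precisely that $x \in L^r$ for every $r \ge q$, in particular $x \in L^p$ since $p \ge q$. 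Continuity of $x$, hence $x \in C_b$, will follow from Lemma \ref{lem1} once I verify $x$ satisfies \eqref{2.7}; alternatively one argues directly that each of the two integral terms is continuous by dominated convergence, exactly as in the proof of Lemma \ref{lem1}.

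The second step is to verify that this $x$ actually satisfies the integral equation \eqref{2.7}. For $t \ge \tau$ I would split $\int_{-\infty}^{t} = \int_{-\infty}^{\tau} + \int_{\tau}^{t}$ in the stable part and $\int_{t}^{+\infty}$ stays as is in the unstable part, then apply the evolution identity $T(t,s) = T(t,\tau)T(\tau,s)$ (and its counterpart for $T(\cdot,\cdot)_|$ on $\operatorname{Ker}P$, using $P(t)T(t,\tau) = T(t,\tau)P(\tau)$, hence $Q(t)T(t,\tau)=T(t,\tau)Q(\tau)$ and $T(t,\tau)_|$ commutes with the projections on the kernel) to factor $T(t,\tau)$ out of the pieces over $(-\infty,\tau]$; what is left over $[\tau,t]$ reassembles, via $P(s)+Q(s)=I$, into $\int_{\tau}^{t}T(t,s)y(s)\,ds$. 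This is a bookkeeping computation with the semigroup law and the commutation relations, and it is routine but must be done carefully because of the two-sided nature and the presence of $T(\tau,t)_|$.

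The third step is uniqueness: if $x_1, x_2 \in Y_1$ both satisfy \eqref{2.7} for the same $y$, then $w := x_1 - x_2$ satisfies the homogeneous equation $w(t) = T(t,\tau)w(\tau)$ for all $t \ge \tau$, and $w \in C_b \subset L^\infty$. Decompose $w(\tau) = P(\tau)w(\tau) + Q(\tau)w(\tau)$. The stable component gives $\|P(t)w(t)\|_t = \|T(t,\tau)P(\tau)w(\tau)\|_t \le De^{a(t-\tau)}\|w(\tau)\|_\tau \to 0$ as $t \to +\infty$; since also $P(t)w(t) = P(t)T(t,\tau)w(\tau)$ is forward-propagated, boundedness as $\tau \to -\infty$ forces $P(t)w(t)=0$ via the same estimate run backward — more directly, $\|Q(\tau)w(\tau)\|_\tau = \|T(\tau,t)_| Q(t)w(t)\|_\tau \le De^{-b(t-\tau)}\|w(t)\|_t \le De^{-b(t-\tau)}\|w\|_\infty \to 0$ as $t \to +\infty$, so $Q(\tau)w(\tau)=0$; and letting $t\to+\infty$ in $\|P(t)w(t)\|_t \le De^{a(t-\tau)}\|w(\tau)\|_\tau$ after noting $w(t)=T(t,\tau)w(\tau)$ together with the stable estimate pins down $P(\tau)w(\tau)=0$ by pushing $\tau \to -\infty$. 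Hence $w \equiv 0$.

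The main obstacle I anticipate is not any single hard estimate but rather handling the interplay between the two different integrability exponents $p$ and $q$ and the weights $\|\cdot\|_t$ carefully enough to land $x$ in $L^p \cap C_b$ rather than merely in $L^\infty$; the case $q=\infty$ (forcing $p=\infty$) and the excluded case $(p,q)=(\infty,1)$ suggest the convolution estimates are delicate at the endpoints, so the bulk of the work is a clean Young/Hölder argument with the exponential kernels, uniform in $t$.
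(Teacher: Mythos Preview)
Your proposal is correct and follows essentially the same route as the paper: the same Green's-function candidate $x=x_1-x_2$, Young's convolution inequality for the $L^p$ bound, the same splitting verification of \eqref{2.7}, continuity via Lemma~\ref{lem1}, and the same uniqueness argument via projecting onto $F^s$/$F^u$ and sending $\tau\to-\infty$ respectively $t\to+\infty$. Two minor remarks: in the paper's convention the second integrand should read $T(t,s)_|Q(s)y(s)$ (your $T(s,t)_|$ has the arguments reversed), and the restriction $(p,q)\ne(\infty,1)$ is only needed for the converse direction (Theorem~\ref{thm:2.3}), not for this theorem --- the convolution estimates here go through for all $p\ge q$.
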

\begin{proof}
For $t\in \mathbb{R}$, we define
$$ x_1(t)=\displaystyle\int\limits_{-\infty}^{t}T(t,\tau)P(\tau)y(\tau)d\tau \quad \text{and}\quad  x_2(t)=\displaystyle\int\limits_{t}^{\infty}T(t,\tau)_|Q(\tau)y(\tau)d\tau.$$
Using Holder's inequality and \eqref{2.6}, we have
\begin{align*}
\|x_1(t)\| &\le \Vert x_1(t)\Vert _t  \le \displaystyle \int_{-\infty}^{t} \Vert T(t,\tau)P(\tau)y(\tau)\Vert _t d\tau  
\le \displaystyle \int_{-\infty}^{t} D e^{{a}(t-\tau )}\Vert y(\tau)\Vert _{\tau} d\tau \nonumber \\
&= \displaystyle\sum_{m=0}^{\infty} \displaystyle \int_{t-m-1}^{t-m} D e^{{a}(t-\tau )}\Vert y(\tau)\Vert _{\tau} d\tau 
\leq \displaystyle\sum_{m=0}^{\infty}D e^{{a}m} \displaystyle \int_{t-m-1}^{t-m} \Vert y(\tau)\Vert _{\tau} d\tau \nonumber  \\
&\le \displaystyle\sum_{m=0}^{\infty}D e^{{a}m} \left( \displaystyle \int_{t-m-1}^{t-m} \Vert y(\tau)\Vert _{\tau}^q d\tau \right)^{1/q} 
\leq \displaystyle\sum_{m=0}^{\infty}D e^{{a}m} \Vert y\Vert_q 
=\frac{D}{1-e^{a}}\Vert y\Vert _q, \\
\|x_2(t)\| &\le \Vert x_2(t)\Vert _t  \le \displaystyle \int_{t}^{+\infty} \Vert T(t,\tau)_|Q(\tau)y(\tau)\Vert _t d\tau  
\le \displaystyle \int_{t}^{+\infty} D e^{-{b}(\tau -t)}\Vert y(\tau)\Vert _{\tau} d\tau \nonumber \\
&= \displaystyle\sum_{m=0}^{\infty} \displaystyle \int_{t+m}^{t+m+1} D e^{-{b}(\tau -t)}\Vert y(\tau)\Vert _{\tau} d\tau 
\leq \displaystyle\sum_{m=0}^{\infty}D e^{-{b}m} \displaystyle \int_{t+m}^{t+m+1} \Vert y(\tau)\Vert _{\tau} d\tau \nonumber  \\
&\le \displaystyle\sum_{m=0}^{\infty}D e^{-{b}m} \left( \displaystyle \int_{t+m}^{t+m+1} \Vert y(\tau)\Vert _{\tau}^q d\tau \right)^{1/q} 
\leq \displaystyle\sum_{m=0}^{\infty}D e^{-{b}m} \Vert y\Vert_q 
=\frac{D}{1-e^{-{b}}}\Vert y\Vert _q. \notag
\end{align*}
Therefore, $x_1$ and $x_2$ are well-defined. Furthermore,
\begin{equation}\label{eq1}
 \|x_1(t)-x_2(t)\|_t \le \Big(\frac{D}{1-e^{a}} + \frac{D}{1-e^{-{b}}}\Big) \Vert y\Vert _q \quad \text{for all }\, t\in \r.
\end{equation}
Acting $P(t)$ to the both sides of \eqref{2.7}, we have
$$ P(t)x(t)= T(t,\tau)P(\tau)x(\tau)+\displaystyle\int\limits_{\tau}^{t}T(t,s)y(s)ds  \quad  \text{for} \quad t\geq \tau.$$
For $\tau \to -\infty$, we obtain
$$ \lim_{\tau \to -\infty}T(t,\tau)P(\tau)x(\tau) =  P(t)x(t) - x_1(t).$$
We have
$$\|T(t,\tau)P(\tau)x(\tau)\| \le \|T(t,\tau)P(\tau)x(\tau)\|_{t} \le D e^{a(t-\tau)} \|x(\tau)\|_{\tau} \le D e^{a(t-\tau)} \|x\|_{\infty}.$$
This leads to $P(t)x(t) = x_1(t)$ for each $t \in \mathbb{R}$.

Fix $\tau \in \mathbb{R} $, applying $Q(t)$ to the both sides of \eqref{2.7}, we have
$$ Q(t)x(t)= T(t,\tau)Q(\tau)x(\tau)+\displaystyle\int\limits_{\tau}^{t}T(t,s)Q(s)y(s)ds  \quad  \text{for} \quad t\geq \tau.$$
By $T(t,\tau): Q(\tau)X \to Q(t)X$ is an isomorphism, 
$$ T(\tau,t)_| Q(t)x(t) = Q(\tau)x(\tau)+\displaystyle\int\limits_{\tau}^{t}T(\tau,s)_|Q(s)y(s)ds \quad  \text{for} \quad t\geq \tau.$$
For $t \to +\infty$, we obtain
$$ \lim_{t\to +\infty}T(\tau,t)_| Q(t)x(t) = Q(\tau)x(\tau)+ x_2(\tau).$$
Same as above, we get $Q(\tau)x(\tau)= -x_2(\tau)$ for each $\tau \in \mathbb{R}$.

So, we have $x(t)=x_1(t)-x_2(t)$ for all $t\in \mathbb{R}$. Therefore, $x\in Y_1$ satisfying \eqref{2.7} is unique.
Next, we show that the functions  $x_1$ and $x_2$ belong to $L^p$.
We only consider $x_2$ since the argument for $x_1$ is analogous. 
Since $p \geq q$, there exists $r \in [1,\infty]$ such that
$$\dfrac{1}{r}+\dfrac{1}{q}=1+\dfrac{1}{p}.$$ 
Let
$$
 \omega (t)=
 \begin{cases}
 0& \text{if}\quad t \geq 0,\\
e^{{b}t}& \text{if}\quad t<0\\ 
 \end{cases}
 $$
 and $z(t)=\Vert y(t)\Vert_t$. We have $\omega \in L^{r}(\r)$ and $z\in L^q(\mathbb{R})$.
By Young's inequality  for convolution,   $w*z \in L^p(\mathbb{R})$. Furthermore,
$$(\omega *z)(t)= \displaystyle \int _{-\infty}^{+\infty}\omega (t-\tau)z(\tau)d\tau=\displaystyle \int _{t}^{+\infty}e^{-{b}(\tau -t)}\Vert y(\tau) \Vert_{\tau}d\tau.$$
We also have $\Vert x_2(t)\Vert _t \le D(\omega *z)(t)$ for all $t$. Thus, $x_2 \in L^p$ and
$$\Vert x_2\Vert_p \leq D \Vert \omega *z\Vert _p \le \frac{D}{(br)^{\frac{1}{r}}} \|y\|_q.$$
Similarly, $x_1 \in L^p$. Therefore, $x_1 - x_2 \in L^p$ and
$$ \Vert x_1 -x_2 \Vert_p \le \Big(\frac{D}{(-ar)^{\frac{1}{r}}}+ \frac{D}{(br)^{\frac{1}{r}}}\Big) \|y\|_q.$$
Let $x(t)=x_1(t)-x_2(t)$ for $t \in \mathbb{R}$, and take $t_0 \in \mathbb{R}$. We now show that $x$ satisfies \eqref{2.7} for $t\ge t_0$. 
\begin{align}
x(t)&=\displaystyle\int\limits_{t_0}^{t}T(t,\tau)y(\tau)d\tau-\displaystyle\int\limits_{t_0}^{t}T(t,\tau)P(\tau)y(\tau)d\tau -\displaystyle\int\limits_{t_0}^{t}T(t,\tau)Q(\tau)y(\tau)d\tau \nonumber
\\&+\displaystyle\int\limits_{-\infty}^{t}T(t,\tau)P(\tau)y(\tau)d\tau-\displaystyle\int\limits_{t}^{+\infty}T(t,\tau)_|Q(\tau)y(\tau)d\tau \nonumber\\
&=\displaystyle\int\limits_{t_0}^{t}T(t,\tau)y(\tau)d\tau+\displaystyle\int\limits_{-\infty}^{t_0}T(t,t_0)T(t_0,\tau)P(\tau)y(\tau)d\tau-\displaystyle\int\limits_{t_0}^{+\infty}T(t,t_0)T(t_0,\tau)_|Q(\tau)y(\tau)d\tau \nonumber\\
&=\displaystyle\int\limits_{t_0}^{t}T(t,\tau)y(\tau)d\tau+T(t,t_0)x(t_0).\nonumber 
\end{align}
By Lemma \ref{lem1} and \eqref{eq1}, $x\in C_b$.
So, there exists a unique $x \in Y_1$ satisfying \eqref{2.7}.
\end{proof}
 Now we establish the converse of Theorem \ref{thm:2.2}.
  \begin{theorem}\label{thm:2.3}
Let $p,q \in [1, \infty]$ with $p \geq q$ and $(p,q)\ne (\infty,1)$. 
  Assume that for each $y \in Y_2$ there exists a unique $x \in Y_1$ satisfying \eqref{2.7}. Then, the evolutionary family $(T(t,\tau))_{t \geq \tau}$ is exponentially dichotomic with respect to the family of norms $\Vert \cdot\Vert _t$.
  \end{theorem}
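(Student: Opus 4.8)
The plan is to recover the dichotomic structure purely from the bijectivity of $H\colon\mathcal D(H)\to Y_2$, following the classical Perron-type scheme adapted to the family-of-norms setting. First I would define, for each fixed $t\in\r$, the \emph{stable subspace}
$$X^s(t)=\{v\in X:\ \text{the solution }u(\cdot)\text{ of }\eqref{2.7}\text{ with }y\equiv0\text{ on }[t,\infty),\ u(t)=v,\ \text{lies in }Y_1\text{ on }[t,\infty)\},$$
i.e. the set of initial data at time $t$ whose forward orbit $T(\cdot,t)v$ is bounded (indeed in $L^p\cap C_b$). Dually, I would define the \emph{unstable subspace} $X^u(t)$ as the set of $v$ for which there is an entire backward orbit through $v$ at time $t$ lying in $Y_1$ on $(-\infty,t]$ (using the uniqueness half of the hypothesis to make "the" backward orbit meaningful on each such fiber). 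The first block of work is to show $X=X^s(t)\oplus X^u(t)$ for every $t$: surjectivity of $H$ applied to suitably chosen $y$ supported near $t$ (e.g. $y=\chi_{[t,t+1]}w$ or a bump) produces, from the unique solution $x\in Y_1$, the decomposition $v=x^s+x^u$ of any given $v\in X$; injectivity of $H$ forces $X^s(t)\cap X^u(t)=\{0\}$, since a nonzero common vector would give two distinct $Y_1$-solutions for $y\equiv0$ (the zero solution and the glued stable-forward/unstable-backward one). Then $P(t)$ is defined as the projection onto $X^s(t)$ along $X^u(t)$, and the invariance $P(t)T(t,\tau)=T(t,\tau)P(\tau)$ together with the isomorphism property $T(t,\tau)\colon X^u(\tau)\to X^u(t)$ follow from the cocycle identity and the characterization of the fibers by boundedness of orbits.

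The second block is the quantitative exponential estimates \eqref{2.6}. Here the standard device is to feed $H$ inputs of the form $y(s)=\varphi(s)\,T(s,\tau)P(\tau)x$ for scalar cutoffs $\varphi$ and read off, from $\|H^{-1}\|$ (bounded by the closed graph theorem, since $H$ is closed by Lemma \ref{lem2} and bijective), an integral inequality for the decaying quantity $g(t)=\|T(t,\tau)P(\tau)x\|_t$; a Gronwall/Datko-type argument then upgrades the integral bound to the exponential decay $De^{a(t-\tau)}$. The contraction-on-$X^u$ estimate is obtained symmetrically using backward orbits and the inverse $T(\tau,t)_|$. Boundedness of the projections $P(t)$ (uniformly in $t$, with respect to $\|\cdot\|_t$) is extracted the same way — this is typically where one must be a little careful, evaluating the unique solution at the instant $t$ and comparing $\|x(t)\|_t$ with $\|y\|_q$.

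I expect the main obstacle to be the endpoint bookkeeping forced by the full range $p,q\in[1,\infty]$, and in particular the reason the case $(p,q)=(\infty,1)$ must be excluded: the argument that converts $\|H^{-1}\|<\infty$ into pointwise-in-time decay relies on test inputs $y$ whose $L^q$-norm is controlled while the produced solution's sup-norm (the $C_b$ part of $Y_1$) is not too large, and the Hölder exponent $r$ with $\tfrac1r+\tfrac1q=1+\tfrac1p$ degenerates exactly at $(\infty,1)$. Concretely, one needs the characteristic-function inputs to lie in $Y_2=L^q$ and the corresponding kernel estimates (as in the Young-inequality computation in the proof of Theorem \ref{thm:2.2}, run in reverse) to remain finite; keeping track of which inequalities survive when $p=\infty$ or $q=\infty$ or $q=1$, and verifying that at least one valid test function exists in each admissible $(p,q)$, is the delicate part. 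The uniqueness of the dichotomy projections, already promised in Comment \ref{re2}, will drop out of the characterization of $X^s(t)$ and $X^u(t)$ as orbit-boundedness subspaces, which depend only on $(T(t,\tau))$ and the norms $\|\cdot\|_t$, not on any auxiliary choice.
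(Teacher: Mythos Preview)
Your proposal is correct and follows essentially the same Perron-type scheme as the paper: define $F^s_\tau$ and $F^u_\tau$ via forward/backward orbit boundedness in $Y_1$, split $X$ by feeding $H$ the input $g=\chi_{[\tau,\tau+1]}T(\cdot,\tau)x$ and using injectivity for directness, bound the projections uniformly via $\|G\|=\|H^{-1}\|$, and extract the exponential bounds from suitably chosen test pairs. The one point worth sharpening is the location of the exclusion $(p,q)\ne(\infty,1)$: in the paper it enters not through Young's inequality but in the \emph{unstable} growth lemma, where a H\"older step comparing $\|\cdot\|_q$ and $\|\cdot\|_p$ on $[\tau,t]$ yields $\|x\|_\tau\le C(t-\tau)^{-(1-\frac1q+\frac1p)}\|T(t,\tau)x\|_t$, and one needs the exponent $1-\tfrac1q+\tfrac1p=\tfrac1r>0$ to force doubling for large $t-\tau$.
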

\begin{proof}
From the assumption of the theorem and Lemma \ref{lem2}, the inverse of $H$, $G:Y_2 \rightarrow \mathcal{D}(H) \subset Y_1$, is a closed linear operator. By the closed graph theorem, $G$ is bounded. 

For $\tau \in \mathbb{R}$, we define the following sets
\begin{align*}
F^{s}_{\tau}&=\{ x\in X :  \sup_{t\ge \tau}\|T(t,\tau)x\|_t <\infty \; \text{ and }\; z\in L^p \}, \; \text{where }\; z(t)=
\begin{cases}
T(t,\tau)x,& t\ge \tau,\\
0,&t<\tau.
\end{cases} \\ 
\mathcal{F}_{\tau}&=\left\{ \varphi: (-\infty, \tau] \to X  \; \text{ such that } \; \varphi(t)=T(t,s)\varphi(s) \; \text{ for all } \; s\le \tau \; \text{ and } \; s\le t\le \tau \right\}. \cr
F^{u}_{\tau}&=\{ x\in X :  \; \exists \; \varphi_x \in \mathcal{F}_{\tau}  \text{ such that }  \varphi_x(\tau)=x, \,   
\sup_{t \le \tau}\|\varphi_x(t)\|_t <\infty  \text{ and } w\in L^p\},\label{fu} \\
\text{ where }&
 w(t)=
\begin{cases}
\varphi_x(t),& t\le \tau,\\
0,&t>\tau.
\end{cases} \notag
\end{align*}
Clearly, $F^s_{\tau}$ and $ F_{\tau}^u$ are subspaces of $X$. For easy tracking, we divide the proof steps into the lemmas below.
\begin{lemma} \label{lem}
For each $\tau \in \r$,
\begin{equation}
X=F_{\tau}^s \oplus F_{\tau}^u. \label{2.15}
\end{equation}
\end{lemma}
\begin{proof}
For $x\in X$, take
$$
 g(t)=
 \begin{cases}
 T(t,\tau)x& \text{if}\quad t \in [\tau ,\tau +1],\\
 0 & \text{otherwise}. 
 \end{cases}
 $$
 It is easy to see that $g \in Y_2$. Hence, there exists $v \in \mathcal{D}(H)$ such that $Hv=g$. Furthermore,
 \begin{align*}
 v(t)&=T(t,\tau)v(\tau)+\displaystyle\int\limits_{\tau}^{t}T(t,s)g(s)ds\\
 &=T(t,\tau)v(\tau)+\displaystyle\int\limits_{\tau}^{t}T(t,s)T(s,\tau)\chi_{[\tau, \tau+1]} x ds\\
 &=T(t,\tau)v(\tau)+\displaystyle\int\limits_{\tau}^{t}T(t,\tau)\chi_{[\tau, \tau+1]}x ds\\
&=\begin{cases}
T(t,\tau)(v(\tau)+(t-\tau)x)& \text{ for } t\in [\tau, \tau+1),\\
T(t,\tau)(v(\tau)+x)& \text{ for } t\ge \tau+1.  
\end{cases}
 \end{align*}
Since $v \in Y_1$ leads to $v(\tau)+x \in F^s_{\tau}$. For $s \leq \tau$ and $t \in [s,\tau]$, we have
$$v(t)= T(t,s)v(s)+\displaystyle\int\limits_{s}^{t}T(t,\xi)g(\xi) d\xi=T(t,s)v(s).$$
Thus, $v_{|(-\infty,\tau]} \in \mathcal{F}_{\tau}$. From $v \in Y_1$ it follows that $v(\tau) \in F^u_{\tau}$.
Therefore, $x \in F^s_{\tau}+ F^u_{\tau}$ for all $x\in X$. So, $X=F^s_{\tau}+ F^u_{\tau}$.
Now we prove that if $x \in F^s_{\tau}\bigcap F^u_{\tau}$ then $x=0$. Indeed, we define $u:\mathbb{R} \rightarrow X$ as follows
 $$
 u(t)=
 \begin{cases}
 T(t,\tau)x & \text{if}\quad t\geq \tau,\\
\varphi_x(t) & \text{ otherwise}.
 \end{cases}
 $$
From the definition of $F^s_{\tau}$ and $F^u_{\tau}$ it follows that $u \in Y_1$. Moreover, we have $Hu=0$. 
Therefore, $u \in \mathcal{D}(H)$. Since $H$ is invertible, we conclude that  $u=0$ and therefore $x=0$.
\end{proof}

From the decomposition in \eqref{2.15}, we define the projections
$$ P(\tau):X \rightarrow F^s_{\tau} \quad \text{and} \quad  Q(\tau):X\rightarrow F^u_{\tau}$$
for each $\tau \in \r$. Then,  $P(\tau)+Q(\tau)=Id$.
 \begin{lemma}
There exits $M>0$ such that
\begin{equation}
\Vert P(\tau)x\Vert _\tau \leq M\Vert x\Vert_\tau   \label{2.16}
\end{equation} 
for $x \in X$ and $\tau \in \mathbb{R}$. Consequently, $P(\tau)$ is bounded for every $\tau \in \r$.         
\end{lemma}
\begin{proof}
By  $P(\tau)x  =v(\tau)+x$ (using the notation of Lemma \ref{lem}), we have
 $$\Vert P(\tau)x\Vert _{\tau}=\Vert v(\tau)+x \Vert _{\tau} \leq \Vert v(\tau)\Vert _{\tau}+\Vert x\Vert _{\tau}.$$ 
Moreover, $v(\tau)=T(\tau,s)v(s)$ for $s \in [\tau -1, \tau]$. Thus, 
$$ \Vert v(\tau)\Vert _{\tau}=\Vert T(\tau,s)v(s)\Vert _{\tau}
 \leq Ke^{c(\tau-s)}\Vert v(s)\Vert _s$$ for $s\in [\tau -1, \tau]$. Integrating over $s$, we obtain
 \begin{align*}
\Vert v(\tau)\Vert _{\tau}
 &\leq Ke^{c}\displaystyle\int_{\tau -1}^{\tau}\Vert v(s)\Vert _s ds
 \leq  Ke^{c}\left(\displaystyle\int_{\tau -1}^{\tau}\Vert v(s)\Vert _s^p ds\right)^{\frac{1}{p}}\\
 &\leq Ke^{c} \Vert v\Vert _p=Ke^c\Vert Gg\Vert _p
 \leq Ke^c \Vert G\Vert \Vert g\Vert _q\\
&=Ke^c\Vert G\Vert \left(\displaystyle\int_{\tau }^{\tau +1}\Vert T(t,\tau)x\Vert _{t}^q dt\right)^{\frac{1}{q}} 
\leq K^2e^{2c} \|G\| \Vert x \Vert _{\tau}.
\end{align*}
This show that \eqref{2.16} holds with $M=K^2e^{2c}\Vert G\Vert +1$.
\end{proof}
The spaces $F_{\tau}^s$ and $F_{\tau}^u$ are invariant with the evolutionary family.
\begin{lemma}\label{lem3}
$T(t,\tau)F_{\tau}^s \subset F_{t}^s$ and $T(t,\tau)F_{\tau}^u = F_{t}^u$ for all $t\ge \tau$.
\end{lemma}
\begin{proof}
Take $x\in F^{s}_{\tau}$ and with $\xi \ge t$, we have $T(\xi, t)T(t,\tau)x = T(\xi, \tau)x$. Thus,
$$ \sup_{\xi \ge t}\|T(\xi, t)T(t,\tau)x\|_{\xi} = \sup_{\xi \ge t} \|T(\xi, \tau)x\|_{\xi} \le \sup_{\xi \ge \tau} \|T(\xi, \tau)x\|_{\xi} <\infty, $$
and
$$ 
z_1(\xi)=
\begin{cases}
T(\xi, \tau)x,& \xi \ge t,\\
0,&\xi < t
\end{cases}
 $$
belongs to $L^p$.
Therefore, $T(t,\tau)x \in F^{s}_t$.
So, $ T(t,\tau) F^{s}_{\tau} \subset  F^{s}_t$ for all $t\ge \tau$.

Take $x\in F^{u}_{\tau}$, put $y=T(t,\tau)x$. We define the function $\varphi_y$ as follows
$$ 
\varphi_y(\xi)=
\begin{cases}
\varphi_x(\xi),& \xi \le \tau,\\
T(\xi,\tau)x,& \tau \le \xi \le t.
\end{cases}
 $$
Then, $\varphi_{y}\in \mathcal{F}_{t}$. Indeed, we easily see  $\varphi_y(\xi)=T(\xi, s)\varphi_{y}(s)$ for $s\ge \tau$ and $\xi \in [s,t]$, $s<\tau$ and $\xi \in[s,\tau]$.
For  $s<\tau$ and $\xi \in [\tau, t]$, we have
$$\varphi_y(\xi)=T(\xi, \tau)x = T(\xi, \tau)T(\tau, s)\varphi_{x}(s)=T(\xi,s)\varphi_x(s)=T(\xi, s)\varphi_{y}(s).$$
For $\xi \in [\tau, t]$, we have 
$$ \|T(\xi,\tau)x\|_{\xi} \le K e^{c(t-\tau)}\|x\|_{\tau}. $$
Therefore, $\sup_{\xi \le t}\|\varphi_y(\xi)\|_{\xi} <\infty$; and
$w_1 \in L^p$ with $w_1(\xi)=\varphi_y(\xi)$ if $\xi \le t$ and $w_1(\xi)=0$ if $\xi >t$. So, $T(t,\tau)x \in F^{u}_t$.
Therefore, $T(t,\tau) F^{u}_{\tau} \subset F^{u}_t$.

For $y\in F^{u}_t$, we easily check $x=\varphi_{y}(\tau) \in F^{u}_{\tau}$ with $\varphi_x := \varphi_{y}|_{(-\infty, \tau]}$. 
Because of $\varphi_y \in \mathcal{F}_t$,  $T(t, \tau)x = y$.
Therefore, $T(t,\tau) F^{u}_{\tau} = F^{u}_t$.
\end{proof}
By a similar proof as in \cite[Lemma 8]{14}, we obtain the exponential decay of the evolutionary family on the space $F_{\tau}^s$.
\begin{lemma}
There exist $\lambda , D>0 $ such that
\begin{equation*}
\Vert T(t,\tau)P(\tau)x\Vert_t \leq De^{-\lambda(t-\tau)}\Vert x\Vert_\tau  
\end{equation*}
 for $x \in X$ and $t \geq \tau$.
\end{lemma}
We now show that the evolutionary family is exponentially growing on the space $F_{\tau}^u$.
\begin{lemma}\label{lem4}
There exist $\lambda , D>0 $ such that 
\begin{equation}
\Vert T(t,\tau)Q(\tau)x\Vert _t \geq \frac{1}{D} e^{\lambda(t-\tau)}\Vert Q(\tau)x\Vert _\tau \label{2.21}
\end{equation}
 for $x \in X$ and $t \geq \tau$.
\end{lemma}
\begin{proof}
Firstly, we prove that there exists $C>0$ such that 
\begin{equation}
C\Vert T(t,\tau)x \Vert _t \geq \Vert x\Vert _{\tau} \label{2.22}
\end{equation}
 for $t\geq \tau, \,x \in F^u_{\tau}$. By Lemma \ref{lem3}, we have $y=T(t+1,\tau)x \in F^u_{t+1}$, $\varphi_y \in \mathcal{F}_{t+1}$, and $w\in L^p$, where
$$ 
w(\xi)=
\begin{cases}
\varphi_y(\xi),& \xi \le t+1,\\
0,& \xi > t+1,
\end{cases}
\quad \text{ and } \quad
\varphi_y (\xi)=
\begin{cases}
\varphi_x(\xi),& \xi \le \tau,\\
T(\xi, \tau)x,& \tau \le \xi  \le t+1.
\end{cases}
 $$
Let $\psi:\r\to [0,1]$  be a continuously differentiable function such that
$$ {\rm supp \,} \psi \subset (-\infty, t+1],\quad \psi(\xi)=1 \text{ \; for \; } \xi \le t,\quad \sup_{\xi \in \r}|\psi'(\xi)| \le 2. $$
It easy to verify that $\psi w \in Y_1$, $\psi' w \in Y_2$, and $H(\psi w)=\psi' w$. We have
 \begin{align*}
\Vert \psi w \Vert _{\infty}&=\Vert G(\psi' w )\Vert _{\infty} \leq \Vert G \Vert \Vert\psi' w \Vert _{q}
=\Vert G \Vert \left( \displaystyle \int_{t}^{t+1} \Vert \psi'(\xi) w(\xi) \Vert ^q_{\xi} d\xi\right)^{\frac{1}{q}}\\
&\leq 2 \Vert G \Vert \left( \displaystyle \int_{t}^{t+1} \Vert T(\xi ,t)T(t,\tau) x \Vert ^q_{\xi} d\xi\right)^{\frac{1}{q}}
\leq 2Ke^c \Vert G \Vert \Vert T(t,\tau)x\Vert _t.
\end{align*} 
Therefore, $\|x\|_{\tau}=\|(\psi w)(\tau)\|_{\tau} \le \|\psi w\|_{\infty}  \le 2Ke^c \Vert G \Vert \Vert T(t,\tau)x\Vert _t$.
We now show that there exists $T>0$ such that 
\begin{equation}
\Vert T(t,\tau)x\Vert _t \geq 2 \Vert x\Vert _{\tau} \quad  \text{ for } \quad t-\tau \geq T.  \label{2.23}
\end{equation}
Indeed, we consider the following functions
$$ y_2 (\xi)=- \chi _{[\tau ,t]}(\xi) w (\xi) \quad\text{and}\quad  v_2(\xi)=\int_{\xi}^{+\infty}\chi _{[\tau ,t]}(s)ds\, w(\xi).$$
Then, $v_2\in Y_1, y_2 \in Y_2$, and $Hv_2=y_2$. Therefore,
\begin{align*}
 \Vert v_2 \Vert _{\infty}&=\Vert G y_2 \Vert _{\infty} \leq \Vert G \Vert \Vert y_2 \Vert _{q}
 =\Vert G \Vert \left(  \displaystyle \int_{\tau}^{t} \Vert w (\xi) \Vert ^q_{\xi} d\xi\right)^{\frac{1}{q}}\\
 &\leq \Vert G \Vert \left[ \left(  \displaystyle \int_{\tau}^{t} \Vert w (\xi) \Vert ^p_{\xi} d\xi\right)^{\frac{q}{p}}(t-\tau)^{1-\frac{q}{p}}\right]^{\frac{1}{q}}\\
 &=\Vert G \Vert (t-\tau)^{\frac{1}{q}-\frac{1}{p}} \left(  \displaystyle \int_{\tau}^{t} \Vert w (\xi) \Vert ^p_{\xi} d\xi\right)^{\frac{1}{p}}\\
 &=\Vert G \Vert (t-\tau)^{\frac{1}{q}-\frac{1}{p}} \left(  \displaystyle \int_{\tau}^{t} \Vert \psi (\xi) w (\xi) \Vert ^p_{\xi} d\xi\right)^{\frac{1}{p}}\\
 &\leq \Vert G \Vert (t-\tau)^{\frac{1}{q}-\frac{1}{p}} \Vert \psi w \Vert _{p}\\
 &\leq 2Ke^c \Vert G \Vert ^2 (t-\tau)^{\frac{1}{q}-\frac{1}{p}} \Vert T(t,\tau)x \Vert _t.
\end{align*}
Hence, $(t-\tau)\Vert x \Vert _{\tau}=(t-\tau)\Vert w (\tau) \Vert _{\tau}=(t-\tau)\|v_2(\tau)\| \le 2Ke^c \Vert G \Vert ^2 (t-\tau)^{\frac{1}{q}-\frac{1}{p}} \Vert T(t,\tau)x \Vert _t$. So,
$$\Vert x \Vert _{\tau} \leq \dfrac{2Ke^{c}\Vert G \Vert ^2}{(t-\tau)^{1-\frac{1}{q}+\frac{1}{p}}}\Vert T(t,\tau)x \Vert _t.$$
Since $(p,q)\ne (\infty,1)$, $1-\frac{1}{q}+\frac{1}{p}>0$. Thus, $T>0$ exists such that \eqref{2.23} holds.

For $t \geq \tau$ and $x\in X$, write $t-\tau =kT+s$ with $0 \leq s <T$ and $k\in \mathbb{N}$. Then,
$$T(t,\tau)Q(\tau)x=\prod_{i=0}^{k-1} T(t -iT, t-(i+1)T)T(\tau+s,\tau)Q(\tau) x.$$
By \eqref{2.22} and \eqref{2.23},
 \begin{align*}
 \Vert T(t,\tau)Q(\tau)x \Vert _t  &\geq 2^k \Vert T(\tau +s,\tau)Q(\tau)x\Vert _{\tau+s}\\
 & \geq \frac{2^k}{C}\Vert Q(\tau)x\Vert _{\tau}
 \geq \frac{1}{2C}e^{\frac{(t-\tau) \ln 2}{T}}\Vert Q(\tau)x\Vert _{\tau}.
 \end{align*}
So, the inequality \eqref{2.21} holds for $\lambda =\frac{\ln 2}{T}$ and $D=2C$.
\end{proof}

To complete the proof, we need to verify the remainder conditions of the exponential dichotomy.
By Lemma \ref{lem3} and Lemma \ref{lem4}, $T(t,\tau): Q(\tau)X \to Q(t)X$ is bijective and
$$ \|T(\tau,t)_|Q(t)x\|_{\tau} \le {D} e^{-\lambda(t-\tau)}\Vert Q(t)x\Vert _t \le D(1+M)e^{-\lambda(t-\tau)}\Vert x\Vert _t$$
for all $t\ge \tau$. By Lemma \ref{lem3}, 
we have $T(t,\tau)Q(\tau)x\in F^u_{t}$ and $T(t,\tau)P(\tau)x\in F^s_{t}$. Therefore,
$P(t)T(t,\tau)x=T(t,\tau)P(\tau)x$ for every $x\in X$, which means that $P(t)T(t,\tau)=T(t,\tau)P(\tau)$ for $t\ge \tau$. So, the evolutionary family $(T(t,\tau))_{t \geq \tau}$ is exponentially dichotomic with respect to the family of norms $\Vert \cdot\Vert _t$.
\end{proof}
\begin{commen}\label{re2}
By Remark \ref{re1} and the definition of the subspaces $F^s_{\tau}, F^{u}_{\tau}$, we obtain $X^s(\tau)\subset F^s_{\tau}$ and $X^u(\tau)\subset F^u_{\tau}$ for each $\tau \in \r$. Therefore, by Lemma \ref{lem} and  Remark \ref{re1} we have $X^s(\tau) = F^s_{\tau}$ and $X^u(\tau) = F^u_{\tau}$ for each $\tau \in \r$. So, the family of dichotomic projections is unique.
\end{commen}

As an application, we now provide a result for the perturbation.
\begin{theorem}
Let the evolution family $(T(t,\tau))_{t\geq \tau}$ be exponentially dichotomic with respect to a family of norms $\Vert \cdot\Vert _t$ and  $B: \mathbb{R} \to B(X)$ be a strong continuous function such that 
\begin{equation}
\Vert B(t)\Vert \leq Me^{-\epsilon \vert t\vert}\varphi(t) ,\quad t \in \mathbb{R}. \label{2.24}
\end{equation}
Then, for $\varphi \in L^q(\r)$ and sufficiently small $M>0$, the evolution family $U(t,\tau)$ satisfying
$$U(t,\tau)=T(t,\tau)+ \displaystyle \int_{\tau}^t T(t,s)B(s)U(s,\tau)ds,\quad  t \geq \tau \text{ and}\quad t,\tau \in \mathbb{R} $$
admits an exponential dichotomy with respect to the same family of norms.
\end{theorem}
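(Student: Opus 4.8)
The plan is to deduce the exponential dichotomy of $U$ from the admissibility characterization, i.e. from Theorems \ref{thm:2.2} and \ref{thm:2.3}, by treating $U$ as a small perturbation of $T$ at the level of the operator of Definition \ref{H}. Before that I would record that $U(t,\tau)$ is indeed an evolutionary family with respect to the same family of norms $\|\cdot\|_t$: existence and uniqueness of $U(t,\tau)\in B(X)$ solving the variation-of-constants equation is the standard successive-approximation argument, the cocycle relation $U(t,s)U(s,\tau)=U(t,\tau)$ follows from uniqueness, and the continuity in Definition \ref{dn:2.1}(ii) from the strong continuity of $B$ and the corresponding properties of $T$. For Definition \ref{dn:2.1}(iii) I would apply Gronwall's inequality to $t\mapsto e^{-a(t-\tau)}\|U(t,\tau)x\|_t$, starting from $\|U(t,\tau)x\|_t\le Ke^{a(t-\tau)}\|x\|_\tau+\int_\tau^t Ke^{a(t-s)}\|B(s)U(s,\tau)x\|_s\,ds$ together with the estimate
\[
\|B(s)z\|_s\le Ce^{\epsilon|s|}\|B(s)\|\,\|z\|\le CM\varphi(s)\,\|z\|_s ,
\]
which combines \eqref{2.3} and \eqref{2.24}; since $\varphi\in L^q$ yields $\int_\tau^t\varphi(s)\,ds\le\|\varphi\|_q(t-\tau)^{1-1/q}$ (and $\le\|\varphi\|_\infty(t-\tau)$ when $q=\infty$), Gronwall produces a bound $\|U(t,\tau)x\|_t\le K'e^{a'(t-\tau)}\|x\|_\tau$ of the required shape.

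The crucial step is the identity relating the two variation-of-constants equations: for $y,x\colon\r\to X$, the function $x$ satisfies
\[
x(t)=U(t,\tau)x(\tau)+\int_\tau^t U(t,s)y(s)\,ds\qquad(t\ge\tau)
\]
for every $\tau$ if and only if $x$ satisfies
\[
x(t)=T(t,\tau)x(\tau)+\int_\tau^t T(t,s)\bigl(B(s)x(s)+y(s)\bigr)\,ds\qquad(t\ge\tau)
\]
for every $\tau$. This is obtained by inserting $U(t,\tau)=T(t,\tau)+\int_\tau^t T(t,r)B(r)U(r,\tau)\,dr$ into the first equation and interchanging the order of integration; the integrability required for Fubini is furnished by the exponential bounds from the previous paragraph, and the converse direction is symmetric. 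I expect the careful bookkeeping here — running the identity simultaneously over all base points $\tau$, so that the domains of the associated operators genuinely coincide — to be the main technical obstacle.

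It remains to package this as an operator perturbation. Define the multiplication operator $\mathcal B\colon Y_1\to Y_2$ by $(\mathcal Bx)(t)=B(t)x(t)$. From the displayed estimate above and $\|x(s)\|_s\le\|x\|_\infty$ one gets $\|(\mathcal Bx)(s)\|_s\le CM\varphi(s)\|x\|_\infty$, hence $\mathcal B\in B(Y_1,Y_2)$ with $\|\mathcal B\|\le CM\|\varphi\|_q$ (using that the $Y_1$-norm dominates $\|\cdot\|_\infty$). Let $H$ be the operator of Definition \ref{H} built from $T$; by Theorem \ref{thm:2.2} it is a bijection $\mathcal D(H)\to Y_2$, so by Lemma \ref{lem2} and the closed graph theorem $G:=H^{-1}\colon Y_2\to Y_1$ is bounded. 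By the equivalence of the two integral equations, the operator $H_U$ built from $U$ has $\mathcal D(H_U)=\mathcal D(H)$ and $H_U=H-\mathcal B|_{\mathcal D(H)}$. Thus $H_Ux=y$ is equivalent to the fixed-point equation $x=Gy+G\mathcal Bx$ in $Y_1$; since $\|G\mathcal B\|\le CM\|\varphi\|_q\,\|G\|<1$ whenever $M<\bigl(C\|\varphi\|_q\,\|G\|\bigr)^{-1}$, the contraction mapping principle gives a unique $x\in Y_1$, and $x=Gy+G\mathcal Bx$ automatically lies in $\mathcal D(H)$ because $G$ maps into $\mathcal D(H)$, which is a subspace. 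Hence $H_U\colon\mathcal D(H_U)\to Y_2$ is bijective, i.e. for every $y\in Y_2$ there is a unique $x\in Y_1$ satisfying \eqref{2.7} for $U$. Applying Theorem \ref{thm:2.3} to $U$ (for a pair $(p,q)$ with $p\ge q$, $(p,q)\ne(\infty,1)$ and $\varphi\in L^q$) then shows that $U(t,\tau)$ admits an exponential dichotomy with respect to the same family of norms $\|\cdot\|_t$, which finishes the proof.
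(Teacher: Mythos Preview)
Your proposal is correct and follows essentially the same route as the paper: both establish the equivalence of the $U$- and $T$-variation-of-constants equations via Fubini, bound the multiplication operator $x\mapsto B(\cdot)x(\cdot)$ from $Y_1$ to $Y_2$ by $CM\|\varphi\|_q$ using \eqref{2.3} and \eqref{2.24}, invert the perturbed operator $H_U=H-\mathcal B$ by a Neumann-series/contraction argument for small $M$, verify the exponential growth bound for $U$ via Gronwall, and then invoke Theorem~\ref{thm:2.3}. The only cosmetic differences are that the paper factors $H_U=(I-\mathcal BH^{-1})H$ and inverts on $Y_2$ rather than solving $x=Gy+G\mathcal Bx$ on $Y_1$, and that it bounds $\int_\tau^t\varphi$ by splitting into unit intervals to get $\|\varphi\|_q(t-\tau+1)$ instead of your H\"older estimate; neither changes the substance of the argument.
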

\begin{proof}
Let $L$ be the linear operator associated to the evolution family  $(U(t,\tau))_{t \geq \tau}$, in the same as the linear operator $H$ is associated to the evolution family  $(T(t,\tau))_{t\geq \tau}$, see Definition \ref{H}. More precisely, $L$ is defined by $Lx=y$ in the domain $\mathcal{D}(L)$, namely
$$x(t)=U(t,\tau)x(\tau)+ \displaystyle \int_{\tau}^t U(t,s)y(s)ds,\quad  t \geq \tau.$$
By direct transformation and using Fubini theorem, we obtain the following relation
\begin{align}
x(t)=T(t,\tau)x(\tau)+\displaystyle \int_{\tau}^t T(t,w) \left( y(w)+B(w)x(w) \right) dw \quad  t \geq \tau. \label{2.25}
\end{align}
We now define an operator $P: Y_1 \to Y_2$ by $(Px)(t)=B(t)x(t)$ for $t \in\mathbb{R}$. By \eqref{2.3} and \eqref{2.24}, we have
\begin{align*}
\Vert B(t)x(t) \Vert _t &\leq Ce^{\epsilon\vert t \vert} \Vert B(t)x(t) \Vert 
\leq MC \varphi(t) \Vert x(t) \Vert 
\leq MC \varphi(t) \Vert x(t) \Vert  _t \le MC \varphi(t) \|x\|_{Y_1}
\end{align*}
for $t \in \mathbb{R}$. Therefore, we have
\begin{align*}
\Vert Px\Vert _q=\Big(\int_{-\infty}^{+\infty} \Vert B(t)x(t) \Vert _t^q dt\Big)^{\frac{1}{q}} \leq MC \|x\|_{Y_1} \Big( \int_{-\infty}^{+\infty} (\varphi(t))^q dt \Big)^{\frac{1}{q}} = MC \|\varphi\|_q \|x\|_{Y_1}.
\end{align*}
Hence, $P$ is a bounded linear operator and 
\begin{equation}
\Vert P\Vert  \leq MC \|\varphi\|_q. \label{2.26}
\end{equation}
By the assumption, we have $H:\mathcal{D}(H) \rightarrow Y_2 $ has a bounded invertible operator.
By \eqref{2.26}, for $M>0$ is small enough, we obtain
$$\Vert PH^{-1} \Vert \leq \Vert P\Vert \Vert H^{-1} \Vert \leq MC  \|\varphi\|_q \Vert H^{-1} \Vert <1.$$
Therefore, $I-PH^{-1}: Y_2 \to Y_2$ is invertible. 
Furthermore, it follows from \eqref{2.25} that $\mathcal{D}(H)=\mathcal{D}(L)$ and $H=L+P$. So,
$$ L=H-P=(I-PH^{-1})H: \mathcal{D}(H) \rightarrow Y_2$$
is also invertible for sufficiently small $M>0$.

It remains to show that there exists $K_1, c_1>0$ such that
\begin{equation*}
\Vert U(t,\tau)x\Vert _t \leq K_1e^{c_1(t-\tau)}\Vert x\Vert _{\tau} \quad \text{ for} \quad t\geq \tau. \label{2.27}
\end{equation*}
Indeed, we have
\begin{align*}
\Vert U(t,\tau)x \Vert _t &=\Big\Vert T(t,\tau)x + \displaystyle \int_{\tau}^t T(t,s)B(s)U(s,\tau)xds\Big\Vert _t\\
&\leq Ke^{c(t-\tau)}\Vert x\Vert _{\tau} + K \displaystyle \int_{\tau}^t e^{c(t-s)}\Vert B(s)U(s,\tau)x\Vert _sds\\
&\leq  Ke^{c(t-\tau)}\Vert x\Vert _{\tau} + MCK \displaystyle \int_{\tau}^t e^{c(t-s)} \varphi(s) \Vert U(s,\tau)x\Vert _sds
\end{align*}
for $t \geq \tau$. Let $\phi (t)=e^{-ct}\Vert U(t,\tau)x\Vert _t$, we get
$$ \phi (t) \leq K\phi (\tau)+MCK \displaystyle \int_{\tau}^t  \varphi(s) \phi (s)ds. $$
Using Gronwall's lemma, we have
$$ \phi (t) \leq K\phi(\tau)e^{MCK  \displaystyle\int_{\tau}^t  \varphi(s) ds}.$$
On the other hand,
$$ \int_{\tau}^t  \varphi(s) ds \le \sum_{k=0}^{[t-\tau]}\int_{\tau+k}^{\tau +k+1}\varphi(s) ds \le \|\varphi\|_q (t-\tau +1).$$
Thus,
$$ \phi (t) \leq K\phi(\tau)e^{MCK \|\varphi\|_q (t-\tau +1) }. $$
This implies that
$$ \Vert U(t,\tau)x\Vert _t \le  Ke^{MCK\|\varphi\|_q}e^{(c+MCK \|\varphi\|_q)(t-\tau)}  \Vert x\Vert _{\tau} \quad \text{ for} \quad t\geq \tau. $$
Applying Theorem \ref{thm:2.3}, the evolution family  $(U(t,\tau))_{t\ge \tau}$ is exponentially dichotomic with respect to the family of norms $\Vert \cdot\Vert _t$.
\end{proof}

\bibliographystyle{plain}

\begin{thebibliography}{99}
\addcontentsline{toc}{chapter}{}
\bibitem{ABHN} W. Arendt, C.J.K. Batty, M. Hieber, F. Neubrander, Vector-valued Laplace Transforms and Cauchy Problems, Springer,
2011.
\bibitem{1} L. Barreira, Y. Pesin, Nonuniform Hyperbolicity, Encycl. Math. Appl., Vol. 115, Cambridge University Press, 2007.
\bibitem{2} L. Barreira, C. Valls, Stability of Nonautonomous Differential Equations, Lect. Notes Math., Vol. 1926, Springer,
2008.
\bibitem{17} L. Barreira, C. Valls, Strong and weak admissibility of $L^\infty$ spaces, Electron. J. Qual. Theory Differ. Equ. 2017, Paper No. 78, 22 pp.
\bibitem{14} L. Barreira, D. Dragicevic, C. Valls, Strong and weak $(L^p,L^q )$-admissibility, Bull. Sci. Math.138 (2014), 721-741.
\bibitem{3} C. Chicone, Y. Latushkin, Evolution Semigroups in Dynamical Systems and Differential Equations, Math. Surv.
Monogr., Vol. 70, Amer. Math. Soc., 1999.
\bibitem{4} J. Dalecskii, M. Krein, Stability of Solutions of Differential Equations in Banach Space, Transl. Math. Monogr.,
Vol. 43, Amer. Math. Soc., 1974.
\bibitem{9} B. Levitan, V. Zhikov, Almost Periodic Functions and Differential Equations, Cambridge University Press, 1982.
\bibitem{10} J. Massera, J. Schäffer, Linear Differential Equations and Function Spaces, Pure Appl. Math., Vol. 21, Academic
Press, 1966.
\bibitem{11} O. Perron, Die Stabilitätsfrage bei Differentialgleichungen, Math. Z. 32 (1930), 703-728.
\bibitem{16} P. Preda, A. Pogan, C. Preda, $(L^p,L^q )$-Admissibility and Exponential Dichotomy of Evolutionary Processes on the Half-line, Integr. Equ. Oper. Theory 49 (2004), 405-418.
\bibitem{12} A. Sasu, Exponential dichotomy for evolution families on the real line, Abstr. Appl. Anal. (2006), Art. 31641, 16 pp.
\bibitem{13} B. Sasu, A. Sasu, Exponential dichotomy and $(L^p,L^q )$-admissibility on the half-line, J. Math. Anal. Appl. 316 (2006), 397-408.
\bibitem{15} A. Sasu, $(L^p,L^q )$-complete admissibility and exponential expansiveness of linear skew-product flows, Mathematica, Vol. 48, No. 1 (2006), 69-76.

\end{thebibliography}

\end{document}